\theoremstyle{plain}
\newtheorem{Lem}{Lemma}
\newtheorem{Cor}{Corollary}
\newtheorem{Thm}{Theorem}
\newcommand*{\supp}{\ensuremath{\mathrm{supp\,}}}
\newcommand*{\Ker}{\ensuremath{\mathrm{Ker\,}}}
\newcommand*{\Ann}{\ensuremath{\mathrm{Ann\,}}}
\newcommand*{\R}{\ensuremath{\mathbb{R}}}
\newcommand*{\Z}{\ensuremath{\mathbb{Z}}}
\newcommand*{\N}{\ensuremath{\mathbb{N}}}
\begin{document}

	\date{}

\author{
	L\'aszl\'o Sz\'ekelyhidi\\
	{\small\it Institute of Mathematics, University of Debrecen,}\\
	{\small\rm e-mail: \tt szekely@science.unideb.hu,}
}	
	\title{Harmonic Synthesis on Group Extensions}

	\maketitle
	
	\begin{abstract}
	Harmonic synthesis describes translation invariant linear spaces of continuous complex valued functions on locally compact abelian groups. The basic result due to L. Schwartz states that such spaces on the reals are topologically generated by the exponential monomials in the space -- in other words the locally compact abelian group of the reals is synthesizable. This result does not hold for continuous functions in several real variables as it was shown by D.I. Gurevich's counterexamples. On the other hand, if two discrete abelian groups have this synthesizability property, then so does their direct sum, as well. In this paper we show that if two locally compact abelian groups have this synthesizability property and at least one of them is discrete, then their direct sum is synthesizable. In fact, more generally, we show that any extension of a synthesizable locally compact abelian group by a synthesizable discrete abelian group is synthesizable. This is an important step toward the complete characterisation of synthesizable locally compact abelian groups.
	\end{abstract}
	
	\footnotetext[1]{The research was supported by the  the
		Hungarian National Foundation for Scientific Research (OTKA),
		Grant No.\ K-134191.}\footnotetext[2]{Keywords and phrases:
		variety, spectral synthesis}\footnotetext[3]{AMS (2000) Subject Classification: 43A45, 22D99}
	\bigskip\bigskip


	\section{Introduction}
Harmonic (or spectral) synthesis deals with uniformly closed translation invariant linear spaces of continuous complex valued functions on a locally compact Abelian group. Such a function space is called a {\it variety}. The first classical result on this field is due to Laurent Schwartz \cite{MR0023948}, which states that given any complex valued continuous function on the reals it can be uniformly approximated on compact sets by exponential polynomials, which belong to the smallest variety including the given function. This can be reformulated in the following manner: if a continuous complex valued function on the reals satisfies a system of homogeneous convolution equations corresponding to compactly supported complex Borel measures, then this function is the uniform limit on compact sets of exponential polynomials, which are also solutions of the same system of convolution equations. In particular, every such system  has exponential solutions. It is obvious, that this problem makes sense on locally compact Abelian groups using appropriate definition of exponential polynomials. One possible definition is the following: we call a continuous complex valued function on a locally compact Abelian group an exponential polynomial, if the smallest variety including the function is finite dimensional. We say that {\it spectral analysis} holds for a variety, if every nonzero subvariety contains a one dimensional subvariety. We say that a variety is {\it synthesizable}, if its finite dimensional subvarieties span a dense subspace in the variety. Finally, we say that {\it spectral synthesis} holds for a variety, if every subvariety is synthesizable. In particular, if every variety on the group is synthesizable, then we say that {\it spectral synthesis holds on the group}, or {\it the group is synthesizable}. For more about spectral analysis and synthesis on groups see \cite{MR2680008,MR3185617}.
\vskip.2cm

In \cite{MR2340978}, the authors characterized synthesizable discrete Abelian groups: spectral synthesis holds on the discrete Abelian group $G$, if and only if $G$ has finite torsion free rank. In particular, from this result it follows, that if $G$ and $H$ are synthesizable, then so is $G\times H$. Unfortunately, such a result does not hold in the non-discrete case. Namely, by the fundamental result of L.~Schwartz \cite{MR0023948}, $\R$ is synthesizable, but D.~I.~Gurevich showed in \cite{MR0390759} that spectral synthesis fails to hold on $\R\times\R$. In other words, direct product does not necessarily preserve synthesizability in non-discrete cases. However, in this paper we show, that if spectral synthesis holds on two locally compact Abelian groups, then it holds on their direct product assuming at least one of them is synthesizable. 

\section{Preliminaries}

\hspace{\parindent}  On commutative topological groups finite dimensional varieties of continuous functions are completely characterized: they consist of  the elements of the complex algebra of continuous complex valued functions generated by all continuous homomorphisms into the multiplicative group of nonzero complex numbers (\textit{exponentials}), and all continuous homomorphisms into the additive group of all complex numbers (\textit{additive functions}).  In particular, an \textit{exponential monomial} (or {\it $m$-exponential monomial}) is a function of the form
$$
x\mapsto P\big(a_1(x),a_2(x),\dots,a_n(x)\big)m(x),
$$
where $P$ is a complex polynomial in $n$ variables, the $a_i$'s are additive functions, and $m$ is an exponential. Every exponential polynomial is a linear combination of exponential monomials. The $m$-exponential monomials with $m=1$ are called {\it polynomials} (see e.g. \cite{MR2680008,MR3185617}).
\vskip.2cm

Let $G$ be a locally compact Abelian group. It is known that the dual space of $\mathcal C(G)$ can be identified with the space $\mathcal M_c(G)$ of all compactly supported complex Borel measures on $G$. This space is called the {\it measure algebra} of $G$ -- it is a topological algebra with the linear operations, with the convolution of measures and with the weak*-topology arising from $\mathcal C(G)$. On the other hand, the space $\mathcal C(G)$ is a topological vector module over the measure algebra under the action realized by the convolution of measures and functions. The annihilators of subsets in $\mathcal C(G)$ and the annihilators of subsets in $\mathcal M_c(G)$ play an important role in our investigation. For each closed ideal $I$ in $\mathcal M_c(G)$ and for every variety $V$ in $\mathcal C(G)$, $\Ann I$ is a variety in $\mathcal C(G)$ and $\Ann V$ is a closed ideal in  $\mathcal M_c(G)$. Further, we have
$$
\Ann \Ann I=I\enskip\text{and}\enskip \Ann \Ann V=V
$$
(see \cite[Section 11.6]{MR3185617}, \cite[Section 1]{MR3502634}).
\vskip.2cm

The Fourier--Laplace transformation (shortly: Fourier transformation) on the measure algebra is defined as follows: for every exponential $m$ on $G$ and for each measure $\mu$ in $\mathcal M_c(G)$ its {\it Fourier transform} is 
$$
\widehat{\mu}(m)=\int \widecheck{m}\,d\mu,
$$
where $\widecheck{m}(x)=m(-x)$ for each $x$ in $G$. The Fourier transform $\widehat{\mu}$ is a complex valued function defined on the set of all exponentials on $G$. As the mapping $\mu\mapsto \widehat{\mu}$ is linear and $(\mu*\nu)\,\widehat{}=\widehat{\mu}\cdot \widehat{\nu}$, all Fourier transforms form a function algebra. By the injectivity of the Fourier transform, this algebra is isomorphic to $\mathcal M_c(G)$. If we equip the algebra of Fourier transforms by the topology arising from the topology of $\mathcal M_c(G)$, then we get the {\it Fourier algebra} of $G$, denoted by $\mathcal A(G)$. In fact, $\mathcal A(G)$ can be identified with $\mathcal M_c(G)$. We utilize this identification:  for instance, every ideal in $\mathcal A(G)$ is of the form $\widehat{I}$, where $I$ is an ideal in $\mathcal M_c(G)$. Based on this fact, we say that {\it the ideal $\widehat{I}$ in $\mathcal A(G)$ is synthesizable}, or {\it spectral synthesis holds for the ideal $\widehat{I}$ in $\mathcal A(G)$}, if the variety $\Ann I$ in $\mathcal M_c(G)$ is synthesizable, or spectral synthesis holds for it. 
\vskip.2cm

We shall use the polynomial derivations on the Fourier algebra. A {\it derivation} on $\mathcal A(G)$ is a linear operator $D:\mathcal A(G)\to \mathcal A(G)$ such that
$$
D(\widehat{\mu}\cdot \widehat{\nu})=D(\widehat{\mu})\cdot \widehat{\nu}+\widehat{\mu}\cdot D(\widehat{\nu})
$$
holds for each $\widehat{\mu}, \widehat{\nu}$. We say that $D$ is a {\it first order derivation}. Higher order derivations are defined inductively: for a positive integer $n$ we say that linear operator $D$ on $\mathcal A(G)$ is a {\it derivation of order $n+1$}, if the two variable operator
$$
(\widehat{\mu}, \widehat{\nu})\mapsto D(\widehat{\mu}\cdot \widehat{\nu})-D(\widehat{\mu})\cdot \widehat{\nu}-\widehat{\mu}\cdot D(\widehat{\nu})
$$ 
is a derivation of order $n$ in both variables. The identity operator $id$ is considered a derivation of order $0$. All derivations form an algebra of operators, and the derivations in subalgebra generated by all first order derivations are called  {\it polynomial derivations}. They have the form $P(D_1,D_2,\dots,D_k)$, where $D_1,D_2,\dots,D_k$ are first order derivations, and $P$ is a complex polynomial in $k$ variables. In \cite{Sze23}, we proved the following result:

\begin{Thm}\label{deri}
	The linear operator $D$ on $\mathcal A(G)$ is a polynomial derivation if and only if there exists a unique polynomial $f_D$ such that
	$$
	D\widehat{\mu}(m)=\int f_D(x)m(-x)\,d\mu(x)
	$$
	holds for each $\widehat{\mu}$ in $\mathcal A(G)$ and for every exponential $m$ on $G$. 
\end{Thm}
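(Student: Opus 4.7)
I would split the argument into the two implications. For the ``if'' direction, introduce for any continuous $f$ on $G$ the operator $T_f$ defined by $T_f\widehat{\mu}(m) := \int f(x)m(-x)\,d\mu(x)$; under the Fourier isomorphism $\mathcal A(G) \cong \mathcal M_c(G)$ this is nothing other than multiplication of the measure by $f$, namely $T_f\widehat{\mu} = \widehat{f\mu}$. A direct Fubini calculation shows that $T_a$ satisfies the Leibniz identity precisely when $a(x+y) = a(x) + a(y)$, so $T_a$ is a first-order derivation whenever $a$ is additive. Since $T_f T_g = T_{fg}$ and $T_{f+g} = T_f + T_g$, the map $f\mapsto T_f$ is an algebra homomorphism. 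Writing $f_D = P(a_1, \dots, a_k)$ with $P$ a complex polynomial and $a_1, \dots, a_k$ additive, it follows that $T_{f_D} = P(T_{a_1}, \dots, T_{a_k})$ lies in the subalgebra generated by first-order derivations; hence $T_{f_D}$, which equals $D$ by hypothesis, is a polynomial derivation. Uniqueness of $f_D$ is forced by taking $\mu = \delta_y$ and $m \equiv 1$, which yields $f_D(y) = D\widehat{\delta_y}(1)$.

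For the ``only if'' direction, by definition $D = Q(D_1, \dots, D_k)$ for some first-order derivations $D_i$ and complex polynomial $Q$. The crux is then to identify every first-order derivation $D$ on $\mathcal A(G)$ with $T_a$ for some additive $a$; once this is done, $D = Q(T_{a_1}, \dots, T_{a_k}) = T_{Q(a_1, \dots, a_k)}$ by the algebra-homomorphism property above, giving $f_D = Q(a_1, \dots, a_k)$, a polynomial on $G$. To extract the identification, I would apply the Leibniz rule to $\widehat{\delta_y}\widehat{\delta_z} = \widehat{\delta_{y+z}}$ and evaluate at an exponential $m$:
$$
D\widehat{\delta_{y+z}}(m) = D\widehat{\delta_y}(m)\cdot m(-z) + m(-y)\cdot D\widehat{\delta_z}(m).
$$
Setting $\psi(y, m) := m(y)\cdot D\widehat{\delta_y}(m)$ converts this into $\psi(y+z, m) = \psi(y, m) + \psi(z, m)$, so $\psi(\cdot, m)$ is additive in $y$ for each fixed $m$. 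Writing $D\widehat{\delta_y} = \widehat{\nu_y}$ (possible because $D$ maps $\mathcal A(G)$ into itself) and $\mu_y := \delta_{-y}*\nu_y$, Fourier injectivity translates the $y$-additivity of $\psi$ into $\mu_{y+z} = \mu_y + \mu_z$ in $\mathcal M_c(G)$.

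The main obstacle is to conclude from this that $\mu_y = a(y)\,\delta_0$ for a single additive function $a : G \to \mathbb C$, equivalently $\nu_y = a(y)\,\delta_y$ and thus $D\widehat{\delta_y}(m) = a(y)\,m(-y)$. An abstract additive map $y\mapsto \mu_y$ into the measure algebra need not have this rigid form: on $G = \mathbb Z$ the purely algebraic derivation $z^2\,\frac{d}{dz}$ of $\mathbb C[z, z^{-1}]$ corresponds to $\mu_y = -y\,\delta_{-1}$, which is additive in $y$ but not supported at $0$. To rule out such exotic first-order derivations, one must use that $\mathcal A(G)$ is a topological algebra with the weak-$*$ topology inherited from $\mathcal M_c(G)$ and that the derivation $D$ is continuous in this topology. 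Concretely, I would approximate a general $\widehat\mu$ by weak-$*$ limits of finite linear combinations $\sum c_i\widehat{\delta_{y_i}}$, apply the Leibniz rule systematically, and use the resulting support constraints to force $\mu_y$ to be supported at $0$. Once this rigidity is established, the computation from the sufficiency direction extends the formula from Dirac masses to arbitrary $\mu$, completing the necessity direction.
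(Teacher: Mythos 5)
First, a point of reference: the paper does not actually prove Theorem \ref{deri} --- it is imported from \cite{Sze23} with no argument given here --- so there is no internal proof to compare your attempt against. Judged on its own terms, your sufficiency direction and the uniqueness argument are correct: $T_f\widehat{\mu}:=\widehat{f\mu}$ does define a unital algebra homomorphism $f\mapsto T_f$ into the operators on $\mathcal A(G)$, the Fubini computation shows $T_a$ obeys the Leibniz rule exactly when $a$ is additive, and $f_D(y)=D\widehat{\delta_y}(1)$ pins $f_D$ down.

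The genuine gap is in the necessity direction, exactly where you located it, and your proposed repair does not close it. Everything reduces to showing that a first order derivation is some $T_a$, i.e.\ that $\mu_y=\delta_{-y}*\nu_y$ is a multiple of $\delta_0$, i.e.\ that $\psi(y,m)=m(y)D\widehat{\delta_y}(m)$ is independent of $m$; the Leibniz rule on Dirac masses only gives additivity in $y$, not this independence. Your own example $D=z^2\tfrac{d}{dz}$ on $\mathcal A(\Z)\cong\C[z,z^{-1}]$ cannot be excluded by weak-$*$ continuity: writing its action on measures as $D\delta_k=-k\,\delta_{k-1}$ (up to the sign conventions of the transform), one has $\langle D\mu,f\rangle=\langle\mu,Sf\rangle$ with $Sf(k)=-kf(k-1)$, and $S$ maps $\mathcal C(\Z)=\C^{\Z}$ into itself, so $D$ is the adjoint of an operator on $\mathcal C(\Z)$ and hence \emph{is} weak-$*$ continuous, satisfies Leibniz, and maps $\mathcal A(\Z)$ into itself, yet $m(y)D\widehat{\delta_y}(m)$ depends on $m$ and $D\neq T_a$ for any $a$. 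Moreover, on a discrete group every compactly supported measure already is a finite combination of Dirac masses, so ``approximating by combinations of $\widehat{\delta_{y_i}}$ and reading off support constraints'' produces nothing beyond what the Dirac-mass Leibniz identity already gave. Consequently, either the notion of first order derivation used in \cite{Sze23} carries more structure than the bare Leibniz-plus-continuity definition reproduced in this paper --- in which case your proof must identify and exploit that extra hypothesis --- or the necessity direction as you have set it up genuinely fails already on $\Z$. Either way, the step ``every first order derivation equals $T_a$'' is not established by your plan, and it is the entire content of the necessity direction.
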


Iin \cite{Sze23}, we introduced the following concepts. Given an ideal $\widehat{I}$ in $\mathcal A(G)$ and an exponential $m$, we denote by $\mathcal P_{\widehat{I},m}$ the family of all polynomial derivations $P(D_1,D_2,\dots,D_k)$ which annihilate $\widehat{I}$ at $m$. This means that 
$$
\partial^{\alpha}P(D_1,D_2,\dots,D_k)\widehat{\mu}(m)=0
$$
for each multi-index $\alpha$ in $\N^k$, for every exponential $m$, and for every $\widehat{\mu}$ in $\widehat{I}$. The dual concept is the following: given a family $\mathcal P$ of polynomial derivations and an exponential $m$ we denote by $\widehat{I}_{\mathcal P,m}$ the set of all functions $\widehat{\mu}$ which are annihilated by every derivation in the family $\mathcal P$ at $m$. Then $\widehat{I}_{\mathcal P,m}$ is a closed ideal. Obviously, 
$$
\widehat{I}\subseteq \bigcap_m \widehat{I}_{\mathcal P_{\widehat{I},m},m}
$$
holds for every ideal $\widehat{I}$. We call $\widehat{I}$ {\it localizable}, if we have equality in this inclusion. In other words, the ideal $\widehat{I}$ in $\mathcal A(G)$ is localizable if and only if it has the following property: if $\widehat{\mu}$ is annihilated by all polynomial derivations, which annihilate $\widehat{I}$ at each $m$, then $\widehat{\mu}$ is in $\widehat{I}$. The main result in \cite{Sze23} is the following:

\begin{Thm}\label{loc}
	Let $G$ be a locally compact Abelian group. The ideal $\widehat{I}$ in the Fourier algebra is localizable if and only if $\Ann I$ is synthesizable.
\end{Thm}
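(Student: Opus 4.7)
The plan rests on setting up, for each exponential $m$, a bijective correspondence between the exponential monomials of the form $P\cdot\widecheck{m}$ that lie in $\Ann I$ and the polynomial derivations belonging to $\mathcal P_{\widehat{I},m}$; once established, this correspondence converts the synthesizability of $\Ann I$ directly into the localizability of $\widehat{I}$, and vice versa, after unwrapping synthesizability via Hahn--Banach. Since $\Ann\Ann I = I$, the variety $\Ann I$ is synthesizable if and only if every $\mu\in\mathcal M_c(G)$ that annihilates every exponential monomial in $\Ann I$ must lie in $I$, so the theorem amounts to matching this dual condition to the defining condition of localizability of $\widehat{I}$.

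To build the correspondence, fix an exponential $m$ and a polynomial $P$ in additive functions, and let $D$ be the polynomial derivation with $f_D = P$ supplied by Theorem \ref{deri}. The easy half reads off directly from that theorem: $D\widehat{\nu}(m) = \int P(x)\,m(-x)\,d\nu(x) = \int P\cdot\widecheck{m}\,d\nu$, so if $D\in\mathcal P_{\widehat{I},m}$ then taking $\alpha = 0$ in the defining condition yields $P\cdot\widecheck{m}\in\Ann I$. For the reverse implication, translation invariance forces the finite dimensional variety $V_g$ generated by $g = P\cdot\widecheck{m}$ to sit inside $\Ann I$, and the expansion $(T_y g)(x) = \widecheck{m}(y)\,\widecheck{m}(x)\sum_\alpha \frac{1}{\alpha!}(\partial^\alpha P)(a(x))\prod_i a_i(y)^{\alpha_i}$ exhibits $V_g$ as the linear span of the monomials $(\partial^\alpha P)\cdot\widecheck{m}$, once one reduces to the case of linearly independent additive functions $a_i$ so that the coefficient functions $y\mapsto \widecheck{m}(y)\prod_i a_i(y)^{\alpha_i}$ are themselves linearly independent. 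Hence every $(\partial^\alpha P)\cdot\widecheck{m}$ belongs to $\Ann I$, and a second appeal to the easy half, applied to the partial derivations $\partial^\alpha D$ (whose associated polynomials are $\partial^\alpha P$), delivers $D\in\mathcal P_{\widehat{I},m}$.

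With the correspondence in hand, the two directions are short. Suppose $\widehat{I}$ is localizable and $\mu$ annihilates every exponential monomial in $\Ann I$. For any $m$ and any $D\in\mathcal P_{\widehat{I},m}$, the correspondence places $f_D\cdot\widecheck{m}$ in $\Ann I$, so $D\widehat{\mu}(m) = \int f_D\cdot\widecheck{m}\,d\mu = 0$; localizability then forces $\widehat{\mu}\in\widehat{I}$, and the Hahn--Banach reformulation yields the synthesizability of $\Ann I$. Conversely, suppose $\Ann I$ is synthesizable and $\widehat{\mu}\notin\widehat{I}$; by $\Ann\Ann I = I$ pick $f\in\Ann I$ with $\int f\,d\mu\neq 0$, and use synthesizability together with the weak-* continuity of $\mu$ to replace $f$ by an exponential monomial $g = P\cdot m'\in\Ann I$ with $\int g\,d\mu\neq 0$; the correspondence with $m = \widecheck{m'}$ produces $D\in\mathcal P_{\widehat{I},m}$ with $D\widehat{\mu}(m) = \int g\,d\mu\neq 0$, which contradicts localizability.

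The principal obstacle is the translation-invariance step in the second paragraph, specifically the claim that $V_g$ is genuinely spanned by the monomials $(\partial^\alpha P)\cdot\widecheck{m}$: one must justify, via the linear independence of the coefficient functions $y\mapsto \widecheck{m}(y)\prod_i a_i(y)^{\alpha_i}$, that individual $(\partial^\alpha P)\cdot\widecheck{m}$ terms can be isolated as linear combinations of translates $T_y g$, and hence that every partial derivative of $D$ (not merely $D$ itself) annihilates $\widehat{I}$ at $m$. Everything else is a routine dualisation resting on Theorem \ref{deri} and $\Ann\Ann I = I$.
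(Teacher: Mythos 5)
The paper does not actually prove Theorem~\ref{loc}: it is quoted verbatim as the main result of \cite{Sze23}, so there is no in-paper argument to compare yours against. Judged on its own, your reconstruction is sound and is the natural proof. The dual reformulation of synthesizability via Hahn--Banach and $\Ann\Ann I=I$ is correct, the identity $D\widehat{\nu}(m)=\int f_D\widecheck{m}\,d\nu$ from Theorem~\ref{deri} gives the easy half of the correspondence, and you correctly identify and handle the one nontrivial point: that $P(a)\widecheck{m}\in\Ann I$ forces all $(\partial^{\alpha}P)(a)\widecheck{m}$ into $\Ann I$, via the Taylor expansion of translates and the linear independence of $y\mapsto\widecheck{m}(y)\prod_i a_i(y)^{\alpha_i}$ for linearly independent $a_i$. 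Two small points you should still nail down explicitly: (i) the claim that the polynomial attached to $\partial^{\alpha}P(D_1,\dots,D_k)$ is $(\partial^{\alpha}P)(a_1,\dots,a_k)$ rests on the multiplicativity $f_{D_1D_2}=f_{D_1}\cdot f_{D_2}$, which is a one-line computation from the integral formula but is used without comment; (ii) in the converse direction the phrase ``which contradicts localizability'' is misstated --- you are proving localizability, not assuming it; what your argument actually shows is that $\widehat{\mu}\notin\widehat{I}$ implies $\widehat{\mu}\notin\bigcap_m\widehat{I}_{\mathcal P_{\widehat{I},m},m}$, which is the required inclusion by contraposition.
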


\begin{Cor}\label{disc}
	Spectral synthesis holds on a discrete Abelian group if and only if its torsion free rank is finite.
\end{Cor}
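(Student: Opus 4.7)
The plan is to deduce this from Theorem \ref{loc}: spectral synthesis on $G$ is equivalent to every ideal in $\mathcal A(G)$ being localizable. On a discrete abelian group the measure algebra coincides with the complex group algebra $\C[G]$, and the additive functions $\mathrm{Hom}(G,\C)$ form a $\C$-vector space whose dimension equals the torsion free rank $r(G)$. I will handle each implication through this localizability reformulation.

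\emph{Sufficient direction.} Assume $r(G)=r<\infty$ and fix a basis $a_1,\ldots,a_r$ of the additive functions. By Theorem \ref{deri} every polynomial derivation comes from some $f\in\C[a_1,\ldots,a_r]\cong\C[x_1,\ldots,x_r]$, so the algebra of polynomial derivations is Noetherian. For any ideal $\widehat I$ and any exponential $m$, the family $\mathcal P_{\widehat I,m}$ can then be identified with an ideal in this polynomial ring, and the identity $\widehat I=\bigcap_m \widehat I_{\mathcal P_{\widehat I,m},m}$ can be attacked by a primary decomposition argument: the Noetherian hypothesis forces each closed ideal to admit a finite decomposition whose primary components are concentrated at isolated exponentials, and Theorem \ref{deri} matches these components with the local conditions defining $\widehat I_{\mathcal P_{\widehat I,m},m}$. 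Combined with Theorem \ref{loc}, this gives synthesizability.

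\emph{Necessary direction.} Assume $r(G)$ is infinite and pick a $\Z$-independent sequence $g_1,g_2,\ldots$ in $G$, generating a subgroup $H\cong\Z^{(\omega)}$. The strategy is to construct a non-localizable closed ideal on $H$ and lift it to $G$. Since any single polynomial derivation on $\mathcal A(H)$ can involve only finitely many coordinate additive functions, an ideal encoding genuinely infinite-dimensional data---for instance, a closed ideal generated by an infinite sequence of measures mixing all of the new coordinates simultaneously---should be strictly smaller than the intersection of its localizations. The main obstacle is the explicit verification: one must exhibit a single $\widehat\mu$ outside $\widehat I$ that is annihilated at every exponential $m$ by every derivation in $\mathcal P_{\widehat I,m}$. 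This combinatorial core is essentially the content of the discrete case treated in \cite{MR2340978}, which one can either import directly or reprove inside the present derivation-theoretic framework, after which Theorem \ref{loc} yields the failure of synthesis on $G$.
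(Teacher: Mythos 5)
Your overall route---reducing the corollary to localizability via Theorem \ref{loc}---is exactly the paper's route: the paper's proof is a two-line deferral stating that applying Theorem \ref{loc} recovers the relevant results of \cite{Sze23b}, which in turn supersede the original long proof in \cite{MR2340978}. So at the level of strategy you and the author agree. The problem is that your attempt to fill in the two implications contains steps that do not hold up as stated.

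In the sufficiency direction, the Noetherian object you produce is the algebra of polynomial derivations, isomorphic to $\C[x_1,\dots,x_r]$; but the ideal $\widehat I$ whose localizability you must prove lives in $\mathcal A(G)\cong\mathcal M_c(G)$, which for a discrete group is the group algebra $\C[G]$. That algebra is Noetherian only when $G$ is finitely generated, whereas the hypothesis ``finite torsion free rank'' must cover groups like $\Q$ or groups with infinite torsion part. So ``the Noetherian hypothesis forces each closed ideal to admit a finite primary decomposition'' is applied to the wrong ring, and no decomposition of $\widehat I$ follows from it. A further unjustified identification is that of $\mathcal P_{\widehat I,m}$ with an ideal of $\C[x_1,\dots,x_r]$: the defining condition involves closure under the partial derivatives $\partial^\alpha$, and stability under multiplication by arbitrary polynomials is not obvious (the measure $\partial^\beta Q(a(x))\,d\mu(x)$ need not belong to $I$, since $I$ is an ideal for convolution, not for pointwise multiplication by functions). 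In the necessity direction you explicitly defer the construction of a non-localizable ideal on $\Z^{(\omega)}$ and the verification that some $\widehat\mu\notin\widehat I$ is killed by every derivation in each $\mathcal P_{\widehat I,m}$ to \cite{MR2340978}; that verification is the entire content of the implication, so nothing is actually proved there. If you intend to cite the literature for both halves, as the paper does, that is legitimate---but then the intermediate ``primary decomposition'' and ``ideal in a Noetherian ring'' assertions should be removed, since as written they suggest an argument that would fail for non-finitely-generated groups of finite rank.
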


\begin{proof}
	Although a quite long and complicated proof for this result was given in \cite{MR2340978}, we note that a simple application of Theorem \ref{loc} above gives Theorem 1. and Corollary 3.  in \cite{Sze23b}, which yields our statement. 
\end{proof}

\section{Main result}

We have seen above, that if $G,H$ are discrete Abelian groups and spectral synthesis holds on $G$ and on $H$, then spectral synthesis holds on $G\times H$. On the other hand, by the cited results of D.~I.~Gurevich, the corresponding result may not hold if $G$ and $H$ are non-discrete. Our main result in this paper is the following: if $G$ or $H$ is discrete, then spectral synthesis holds on $G\times H$. In fact, we shall prove the following stronger result:

\begin{Thm}\label{main}
Let $G$ be a locally compact Abelian group and $H$ a closed subgroup of $G$. If $G/H$ is discrete, then spectral synthesis holds on $G$ if and only if spectral synthesis holds on $H$ and on $G/H$.
\end{Thm}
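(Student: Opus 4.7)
I will prove the two implications separately.

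\emph{Easy direction.} Since $G/H$ is discrete, $H$ is open in $G$. For the quotient, given any variety $W\subseteq \mathcal C(G/H)$, the pullback $\pi^*W=\{f\circ \pi:f\in W\}$ along the quotient map $\pi\colon G\to G/H$ is a variety in $\mathcal C(G)$ consisting entirely of $H$-invariant functions. Synthesis on $G$ makes its exponential monomials dense in $\pi^*W$; any $H$-invariant exponential monomial $\rho=P(a_1,\dots,a_n)m$ has $m\in V_\rho$ again $H$-invariant (hence factors as $\tilde m\circ\pi$), and then $P(a_1,\dots,a_n)=\rho/m$ is an $H$-invariant polynomial on $G$, which by the short exact sequence of additive functions below equals $\tilde P\circ\pi$ for some polynomial $\tilde P$ on $G/H$. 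Thus $\rho=(\tilde P\tilde m)\circ\pi$ descends to an exponential monomial on $G/H$ lying in $W$, and these are dense in $W$. For the subgroup $H$, given a variety $W_H\subseteq\mathcal C(H)$, openness of $H$ legitimizes extension-by-zero, so
\[
V_{W_H}=\{f\in\mathcal C(G):(\tau_g f)|_H\in W_H\text{ for all }g\in G\}
\]
is a variety in $\mathcal C(G)$ with $V_{W_H}|_H=W_H$. Synthesis on $G$ yields exponential monomials in $V_{W_H}$ dense therein; restricting to $H$ produces exponential monomials in $W_H$ dense in $W_H$.

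\emph{Hard direction — setup.} Assume synthesis on $H$ and on $G/H$. By Theorem~\ref{loc} it suffices to show that every closed ideal $\widehat I\subseteq\mathcal A(G)$ is localizable. Since $H$ is open, every compactly supported measure on $G$ meets only finitely many cosets of $H$, so fixing a set-theoretic section $s\colon G/H\to G$ of $\pi$ gives for each $\mu\in\mathcal M_c(G)$ a finite decomposition $\mu=\sum_{\alpha\in F}\delta_{s(\alpha)}*\mu_\alpha$ with $F\subseteq G/H$ finite and $\mu_\alpha\in\mathcal M_c(H)$. Convolution in $\mathcal M_c(G)$ is twisted by the $H$-valued cocycle $c(\alpha,\beta)=s(\alpha)+s(\beta)-s(\alpha+\beta)$. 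On the dual side, the short exact sequences
\[
0\to \widehat{G/H}\to \widehat G\to \widehat H\to 0,\qquad 0\to \mathrm{Add}(G/H)\to \mathrm{Add}(G)\to \mathrm{Add}(H)\to 0
\]
(right exactness using divisibility of $\mathbb C^*$ and $\mathbb C$ and openness of $H$ for continuity of extensions) show that each polynomial on $G$ is a finite sum of products of polynomials pulled back from $G/H$ and polynomials extended from $H$. By Theorem~\ref{deri}, polynomial derivations on $\mathcal A(G)$ inherit the same decomposition into \emph{vertical} (from $G/H$) and \emph{horizontal} (from $H$) parts.

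\emph{Hard direction — argument.} Suppose $\widehat\mu\in\bigcap_m \widehat I_{\mathcal P_{\widehat I,m},m}$. Fix an exponential $m_H$ on $H$ and an extension $m$ to $G$; any other extension is $m\cdot(\chi\circ\pi)$ for an exponential $\chi$ on $G/H$. For a vertical derivation $D_p$ with $p$ a polynomial on $G/H$, Theorem~\ref{deri} and the coset decomposition give
\[
D_p\widehat\mu\bigl(m\cdot(\chi\circ\pi)\bigr)=\sum_{\alpha\in F}p(\alpha)\chi(-\alpha)m(-s(\alpha))\widehat{\mu_\alpha}(m_H),
\]
so the vanishing of these expressions for all such $p,\chi$ that annihilate (the image of) $\widehat I$ is the localizability condition at $m_H$ of an induced ideal in $\mathcal A(G/H)$. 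Synthesis on $G/H$ (via Theorem~\ref{loc}) then forces the finitely-supported $G/H$-measure $\alpha\mapsto m(-s(\alpha))\widehat{\mu_\alpha}(m_H)$ into that induced ideal. The horizontal derivations, applied at $m$, give analogous constraints at $m_H$ on each $\widehat{\mu_\alpha}\in\mathcal A(H)$; synthesis on $H$ forces each $\widehat{\mu_\alpha}$ into the corresponding ideal in $\mathcal A(H)$. Reassembling these two pieces of information and ranging over $m_H$ delivers $\widehat\mu\in\widehat I$.

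\emph{Main obstacle.} The principal difficulty is handling the cocycle $c$: the extension $G$ of $G/H$ by $H$ need not split topologically, so horizontal and vertical derivations do not commute cleanly and the induced ideals in $\mathcal A(G/H)$ and $\mathcal A(H)$ arising from $\widehat I$ are coupled through $c$. Verifying that the localizability conditions obtained \emph{separately} from synthesis on $H$ and on $G/H$ really combine to the single condition $\widehat\mu\in\widehat I$ requires careful bookkeeping of how the cocycle appears in both the measure decomposition and the action of mixed polynomial derivations. A secondary issue is the non-canonicity of the section $s$ and of the extensions of exponentials and additive functions from $H$ to $G$; one must check independence of the conclusion from these choices.
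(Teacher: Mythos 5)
Your necessity direction is essentially sound (it re-proves, by hand and using the openness of $H$, special cases of the paper's Lemma~\ref{subgroup} and Lemma~\ref{contim}), but the sufficiency direction has a genuine gap, and it is exactly the one you flag yourself under ``Main obstacle.'' The entire mathematical content of the theorem sits in the sentence ``Reassembling these two pieces of information and ranging over $m_H$ delivers $\widehat\mu\in\widehat I$,'' and nothing in the proposal establishes it. Concretely: the ``induced ideals'' in $\mathcal A(G/H)$ and $\mathcal A(H)$ are never defined (one needs, at minimum, to prove that the coset-projection $\mu\mapsto(\mu_\alpha)_\alpha$ carries the closed ideal $I$ to closed ideals downstairs -- this closedness is itself nontrivial and is the technical heart of the paper's Lemma~\ref{Zdir}); the two families of constraints are coupled through the cocycle $c$, as you note, and localizability of the induced ideals only tells you that each piece $\widehat{\mu_\alpha}$ and the ``vertical'' measure $\alpha\mapsto m(-s(\alpha))\widehat{\mu_\alpha}(m_H)$ lie in those induced ideals -- it does not follow that $\widehat\mu$ itself lies in $\widehat I$, since an ideal is not determined by the separate images of its elements under two quotient-like maps. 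Announcing the difficulty is not the same as resolving it.

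The paper avoids this recombination problem entirely by a structural reduction that your proposal is missing. By Lemma~\ref{embed}, $G$ embeds as a closed subgroup of the \emph{direct} product $H\times G/H$, so the non-split cocycle never has to be confronted: by Lemma~\ref{subgroup} it suffices to synthesize $H\times(G/H)$. By Corollary~\ref{disc} the synthesizable discrete group $G/H$ is a continuous homomorphic image of $\Z^k$, so by Lemma~\ref{contim} it suffices to synthesize $H\times\Z^k$, and this is Lemma~\ref{Zdir}, where the decomposition $\mu=\sum_k\mu_k=\sum_k\mu_0*\delta_{(0,k)}$ along the fibers $G\times\{k\}$ is canonical (no section, no cocycle) and the closedness of the projected ideal $I_G$ can be proved by an explicit limit argument. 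If you want to rescue your approach, either adopt this reduction, or supply (i) a proof that your induced ideals are closed ideals and (ii) an actual argument that membership of all the pieces, together with the cocycle relations, forces $\widehat\mu\in\widehat I$; as written, (ii) is assumed rather than proved.
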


In this situation some authors (see \cite{MR0242802, MR1325242} call $G$ an {\it extension of $H$ by the group $D=G/H$}. To prove this theorem we need a couple of preliminary results. The first one  is interesting on its own. In fact, the corresponding result on discrete Abelian groups was proved in \cite[Lemma 6.]{MR2340978}, but we verify it on general locally compact Abelian groups.

\begin{Lem}\label{subgroup}
	Every closed subgroup of a locally compact synthesizable Abelian group is synthesizable. 
\end{Lem}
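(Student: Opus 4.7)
The plan is to lift the given variety $V\subseteq\mathcal{C}(H)$ to a variety $\tilde V\subseteq\mathcal{C}(G)$, invoke synthesizability of $G$ to resolve $\tilde V$ into exponential monomials, and then restrict those back to $H$. For the argument to close, the lift must simultaneously (i) be invariant under every $G$-translation, not merely $H$-translation, (ii) have restriction to $H$ contained in $V$, and (iii) have restriction to $H$ dense in $V$. All three properties can be arranged with a single convolution device.

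To build the lift, fix a Haar measure $dh$ on $H$ and, for $v\in V$ and $\phi\in\mathcal{C}_c(G)$, define
$$
F_{v,\phi}(g)=\int_H v(h)\,\phi(g-h)\,dh.
$$
For every $g$ the integrand is supported in the compact set $(g-\supp\phi)\cap H$, so $F_{v,\phi}\in\mathcal{C}(G)$. A Haar change of variables shows that the translate of $F_{v,\phi}$ by an element of $H$ equals $F_{v',\phi}$ for some translate $v'\in V$, while a translate by an arbitrary $g_0\in G$ equals $F_{v,\tau_{g_0}\phi}$ with $\tau_{g_0}\phi\in\mathcal{C}_c(G)$. Hence the closed linear span $\tilde V$ of $\{F_{v,\phi}:v\in V,\phi\in\mathcal{C}_c(G)\}$ in $\mathcal{C}(G)$ is invariant under all $G$-translations, i.e.\ a variety on $G$. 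Restriction to $H$ of $F_{v,\phi}$ equals the $H$-convolution of $v$ with the compactly supported measure $\phi|_H\cdot dh$, which lies in $V$ because varieties are closed under convolution with such measures; continuity of the restriction map $r\colon\mathcal{C}(G)\to\mathcal{C}(H)$ and closedness of $V$ extend this to $r(\tilde V)\subseteq V$. Conversely, taking a positive approximate identity $\psi_n\in\mathcal{C}_c(H)$ and Tietze extensions $\phi_n\in\mathcal{C}_c(G)$ one obtains $r(F_{v,\phi_n})=v*\psi_n\to v$, so $V\subseteq\overline{r(\tilde V)}$.

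With the lift in hand, the rest is immediate. Synthesizability of $G$ says that $\tilde V$ is the closed linear span of the exponential monomials it contains. The restriction to $H$ of an exponential monomial $P(a_1,\dots,a_n)\cdot m$ on $G$ is the exponential monomial $P(a_1|_H,\dots,a_n|_H)\cdot m|_H$ on $H$, and it lies in $V$ because $r(\tilde V)\subseteq V$. Continuity of $r$ then gives
$$
V\ \subseteq\ \overline{r(\tilde V)}\ \subseteq\ \overline{\mathrm{span}\{w|_H : w\text{ an exponential monomial in }\tilde V\}}\ \subseteq\ V,
$$
so $V$ is the closed span of its own exponential monomials and hence synthesizable. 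The one non-routine step is the construction of $\tilde V$: one must secure full $G$-translation invariance while dealing with the fact that $v$ is not compactly supported on $H$ and that $H$ need not admit a complement in $G$; the partial convolution $F_{v,\phi}$ is precisely engineered to thread this needle.
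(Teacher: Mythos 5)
Your proof is correct, but it follows a genuinely different route from the paper. The paper works entirely on the dual side: it regards $\mathcal M_c(H)$ as a closed subalgebra of $\mathcal M_c(G)$, passes from an ideal $\widehat{I}$ in $\mathcal A(H)$ to the closed ideal $\widehat{I}_G$ it generates in $\mathcal A(G)$, shows via a Leibniz identity that every polynomial derivation annihilating $\widehat{I}$ at an exponential $m$ extends to one annihilating $\widehat{I}_G$ at any extension of $m$, and then invokes the localizability criterion (Theorem~\ref{loc}) on $G$ to pull membership back to $\widehat{I}$. You instead work on the function side: you lift the variety $V\subseteq\mathcal C(H)$ to a variety $\tilde V\subseteq\mathcal C(G)$ by the partial convolution $F_{v,\phi}(g)=\int_H v(h)\phi(g-h)\,dh$, apply synthesizability of $G$ to $\tilde V$ directly, and restrict the resulting exponential monomials back to $H$. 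Your argument is more elementary and self-contained in that it does not use Theorems~\ref{deri} and~\ref{loc} at all, and it makes visible where the exponential monomials of $V$ come from (restrictions of those of $\tilde V$); the price is that it needs Haar measure on $H$, an approximate identity, and a Tietze-type extension, none of which enter the paper's purely algebraic annihilator argument, which in turn slots directly into the localization framework used throughout the rest of the paper. Two routine points you should spell out: the extension of $\psi_n\in\mathcal C_c(H)$ to $\phi_n\in\mathcal C_c(G)$ with $\phi_n|_H=\psi_n$ requires multiplying a Tietze extension by a bump equal to $1$ on a neighbourhood of $\supp\psi_n$ (so that the restriction to $H$ is not disturbed); and the restriction $P(a_1|_H,\dots,a_n|_H)\,m|_H$ of an exponential monomial may degenerate (some $a_i|_H$ may vanish) but is still an exponential monomial on $H$, which is all that is needed.
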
 

\begin{proof}
	The proof is based on our localization result Theorem \ref{loc}. Namely, we show that if $G$ is a locally compact synthesizable Abelian group and $H$ is a closed subgroup of $G$, then every ideal in the Fourier algebra of $H$ is localizable.
	\vskip.2cm
	
	Let $\widehat{I}$ be an ideal in $\mathcal A(H)$, where $I=\Ann V$ is the annihilator of a variety $V$ on $H$. Clearly, every measure in $\mathcal M_c(H)$ can be considered as a measure in $\mathcal M_c(G)$: in fact, $\mathcal M_c(H)$ is a closed subalgebra of $\mathcal M_c(G)$. It follows that $\mathcal A(H)$ can be considered as a closed subalgebra of $\mathcal A(G)$. We remark that some confusion may arise from the fact that the functions in $\mathcal A(G)$ are defined on the set of exponentials on $G$ and the functions in $\mathcal A(H)$ are defined only on the set of exponentials on $H$ -- but clearly, this minor inconvenience can be overcome by the fact, that every exponential on $H$ cen be extended to an exponential on $G$, and the values of the Fourier transforms of the measures in $\mathcal A(H)$ are independent of the particular extension of the measure, as its support is included in $H$.
	\vskip.2cm
	
	It follows that $\widehat{I}$ is a closed subset in $\mathcal A(G)$. Let $\widehat{I}_G$ denote the closure of the ideal generated by the set $\widehat{I}$ in $\mathcal A(G)$. It follows that the functions of the form
	$\widehat{\xi}\cdot \widehat{\mu}$ with $\widehat{\xi}$ in $\mathcal A(G)$ and $\widehat{\mu}$ in $\widehat{I}$ span a dense subset in $\widehat{I}_G$. We show that if a polynomial derivation $P(D)$ annihilates $\widehat{I}$ at the exponential $m$, then any extension it annihilates $\widehat{I}_G$ at $m$. Indeed, let $P$ be a complex polynomial in $n$ variables and $D=(D_1,D_2,\dots,D_n$, where $D_i$ is a first order derivation on $\mathcal A(H)$. By Theorem \ref{deri}, every polynomial derivation on $\mathcal A(H)$ can be extended to $\mathcal A(G)$. On the other hand, we have the following identity:
	$$
	P(D)(\widehat{\xi}\cdot \widehat{\mu})=\sum_{\alpha\in \N^n} \frac{1}{\alpha!} D^{\alpha}\widehat{\xi}\cdot [\partial^{\alpha}P(D)] \widehat{\mu},
	$$
	which can be verified easily, by Leibniz's Rule. It follows that if $P(D)$ annihilates $\widehat{I}$ at the exponential $m$, then 
	$$
	[\partial^{\alpha}P(D)] \widehat{\mu}(m)=0
	$$
	for each $\alpha$, hence $P(D)$ vanishes on each product $\widehat{\xi}\cdot \widehat{\mu}$ at $m$, which implies that $P(D)$ annihilates $\widehat{I}_G$ at $m$. As $G$ is synthesizable, we have that if $\widehat{\nu}$ is annihilated by each $P(D)$ which annihilates  $\widehat{I}_G$ at each $m$, then $\widehat{\nu}$ is in $\widehat{I}_G$. Now assume that $\widehat{\nu}$ is in $\mathcal A(H)$ and it is annihilated by every polynomial derivation which annihilates $\widehat{I}$ at $m$ in the Fourier algebra $\mathcal A(H)$. Then, considering $\widehat{\nu}$ as a function in $\mathcal A(G)$, is annihilated by every polynomial derivation which  annihilates $\widehat{I}_G$ at $m$, where $m$ denotes  any extension of $m$ to an exponential on $G$. By the above argument, $\widehat{\nu}$ is in $\widehat{I}_G$. But this ovbiously means that $\widehat{\nu}$ is in $\widehat{I}$, as the support of $\nu$ is in $H$. This proves that $\widehat{I}$ is localizable, hence $I=\Ann V$ is synthesizable, and we conclude that spectral synthesis holds on $H$ and our proof is complete.
\end{proof}

We recall the following result as well (see \cite{Sze23e}).

\begin{Lem}\label{contim}
Every continuous homomorphic image of a synthesizable locally compact Abelian group is synthesizable.
\end{Lem}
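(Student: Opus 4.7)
The plan is to exploit synthesizability of $G$ by pulling varieties on the image back to $G$, then descending finite-dimensional subvarieties back to $K$. Let $\varphi:G\to K$ denote the continuous surjective homomorphism and put $H=\Ker\varphi$. Invoking the open mapping theorem for locally compact groups, $\varphi$ is open, hence the composition map $\varphi^*:\mathcal C(K)\to\mathcal C(G)$, $f\mapsto f\circ\varphi$, is a topological embedding whose image is exactly the subspace $\mathcal C(G)^H$ of $H$-invariant continuous functions. (Uniform convergence on compacta transfers between $\mathcal C(K)$ and its image because openness of $\varphi$ lets us lift compact sets from $K$ to compact sets in $G$.)

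For any variety $V\subseteq\mathcal C(K)$, set $V^*=\varphi^*(V)$. The identity $T_g(f\circ\varphi)=(T_{\varphi(g)}f)\circ\varphi$ together with surjectivity of $\varphi$ makes $V^*$ translation-invariant, and the embedding property of $\varphi^*$ makes $V^*$ closed, so $V^*$ is a variety on $G$. By hypothesis, $G$ is synthesizable, so the finite-dimensional subvarieties $W\subseteq V^*$ span a dense subspace of $V^*$.

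The crux is that each such $W$ descends to a finite-dimensional subvariety of $V$. Since $W\subseteq V^*\subseteq\mathcal C(G)^H$, every $f\in W$ equals $\tilde f\circ\varphi$ for a unique $\tilde f\in\mathcal C(K)$; the space $\tilde W=\{\tilde f:f\in W\}$ is finite-dimensional, it lies in $V$ because $W\subseteq\varphi^*(V)$, and it is $K$-translation-invariant by the same identity as above. Applying $(\varphi^*)^{-1}$ to the dense-spanning statement for $V^*$, the spaces $\tilde W$ span a dense subspace of $V$, so $V$ is synthesizable; since $V$ was arbitrary, spectral synthesis holds on $K$.

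The main obstacle I anticipate is the topological matter that $\varphi^*$ is a genuine homeomorphism onto its closed image $\mathcal C(G)^H$, which rests on openness of $\varphi$. Once that is settled, the proof reduces to the essentially functorial observation that $H$-invariant finite-dimensional translation-invariant subspaces of $\mathcal C(G)$ are exactly the pullbacks of finite-dimensional varieties on $K$, so synthesizability descends cleanly through $\varphi^*$.
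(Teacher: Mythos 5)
The paper does not actually prove this lemma --- it is imported from \cite{Sze23e} --- so there is no in-paper argument to compare against. Your pullback strategy is the natural one, and its algebraic core is sound: $\varphi^*$ is injective, intertwines translations via $T_g(f\circ\varphi)=(T_{\varphi(g)}f)\circ\varphi$, and (using surjectivity of $\varphi$) finite-dimensional subvarieties of $V^*=\varphi^*(V)$ correspond bijectively to finite-dimensional subvarieties of $V$.

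The genuine gap is the appeal to ``the open mapping theorem for locally compact groups.'' That theorem requires the domain to be $\sigma$-compact (it is a Baire category argument), and the lemma carries no such hypothesis: the identity map from the discretized reals $\R_d$ onto $\R$ is a continuous surjective homomorphism of locally compact Abelian groups which is not open. Openness is not a cosmetic convenience in your argument --- it is exactly what makes $\varphi$ compact-covering, hence what makes $\varphi^*$ a homeomorphism onto a closed image. Without it, (i) $\varphi^*(V)$ need not be closed in $\mathcal C(G)$, so $V^*$ need not be a variety and the synthesizability of $G$ cannot be applied to it, and (ii) $(\varphi^*)^{-1}$ need not be continuous, so density of the span of the finite-dimensional subvarieties of $V^*$ does not transfer back to $V$. (In the example above, $\varphi^*(\mathcal C(\R))$ is a non-closed subspace of $\mathcal C(\R_d)$, and uniform convergence on finite sets says nothing about uniform convergence on compact intervals.) What you have proved is the lemma under the additional assumption that $\varphi$ is open, i.e.\ that $K$ carries the quotient topology of $G/\Ker\varphi$. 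That happens to cover both uses of the lemma in this paper ($\Z^k\to D$ and $H\times\Z^k\to H\times D$ are open maps), but it does not establish the statement as written: the remaining case of a continuous bijective, non-open homomorphism $G/\Ker\varphi\to K$ needs a genuinely different argument, which your method does not supply.
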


We also need the following simple known result.

\begin{Lem}\label{embed}
Let $G$ be a locally compact Abelian group and let $H$ be a closed subgroup of $G$. Then $G$ is topologically be embedded in $H\times G/H$.
\end{Lem}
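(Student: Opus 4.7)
The plan is to construct an explicit candidate embedding $\Phi\colon G \to H \times G/H$ based on a section of the quotient homomorphism $\pi\colon G \to G/H$ and then verify that it is a topological embedding. The core intuition is that any choice of coset representatives $s\colon G/H \to G$ (i.e.\ a right inverse with $\pi \circ s = \mathrm{id}_{G/H}$) decomposes each $g \in G$ into a representative $s(\pi(g))$ and a correction $g - s(\pi(g))$ that automatically lies in $H$.

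With such a section fixed, I would set
$$
\Phi(g) \;=\; \bigl(g - s(\pi(g)),\; \pi(g)\bigr).
$$
The first coordinate belongs to $H$ since $\pi(g - s(\pi(g))) = \pi(g) - \pi(g) = 0$. The candidate inverse is $\Psi\colon H \times G/H \to G$, $(h,d) \mapsto h + s(d)$, and a direct computation shows $\Psi$ and $\Phi$ are mutually inverse: $\Psi \circ \Phi$ reconstructs $g$ from its representative and correction, while $\Phi \circ \Psi$ recovers $(h,d)$ because $\pi(h + s(d)) = d$ and $h + s(d) - s(d) = h$. Continuity of $\Psi$ is immediate from the continuity of addition in $G$, the inclusion $H \hookrightarrow G$, and the chosen section $s$.

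The main obstacle is ensuring that $\Phi$ itself is continuous, which reduces to ensuring that the composition $s \circ \pi\colon G \to G$ is continuous. My plan to handle this is to work locally, selecting continuous coset representatives on a neighbourhood basis of the identity in $G/H$ and propagating them globally via the translation action of $H$ on the fibres of $\pi$; the consistency of the resulting assignment is built into the fact that fibres of $\pi$ are exactly the cosets of $H$. Once $s$ has been arranged to be compatible with the topology in this way, $\Phi$ is continuous because it is assembled from continuous pieces ($\pi$, $s$, and the group operations), and combined with the continuity of $\Psi$ already verified, this promotes $\Phi$ to a homeomorphism onto its image.

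Finally, having produced $\Phi\colon G \to H \times G/H$ as a continuous bijection onto $\Phi(G)$ with continuous inverse, I would conclude that $\Phi(G)$ is the desired topological copy of $G$ inside $H \times G/H$. This is exactly the form needed later: combined with Lemma \ref{subgroup}, it will let one transfer synthesizability from a product ambient group $H \times G/H$ down to the embedded copy of $G$ in the proof of Theorem \ref{main}.
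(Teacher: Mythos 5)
Your construction rests on two ingredients that are not available at the stated level of generality. The first is a continuous section $s\colon G/H\to G$ of the quotient map $\pi$. You rightly flag this as the main obstacle, but the proposed remedy --- choosing continuous representatives near the identity of $G/H$ and propagating them along the fibres by translation --- cannot succeed in general, because the locally defined pieces need not glue to a single-valued global map; the obstruction is precisely the nontriviality of $G$ as a principal $H$-bundle over $G/H$. Concretely, for $G=\R$ and $H=\Z$ the quotient is $\T$, and a continuous section $s\colon\T\to\R$ would be a homeomorphism of the compact connected circle onto a closed interval, which is impossible. So $s\circ\pi$ cannot be made continuous and $\Phi$ is not defined as a continuous map. (In the discrete-quotient case actually used in Theorem~\ref{main} this particular issue evaporates, since $H$ is then open and every section is continuous; but the lemma is stated for arbitrary closed $H$.)

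The second, more serious gap concerns what kind of embedding is produced. Even when a continuous section exists, your $\Phi(g)=\bigl(g-s(\pi(g)),\pi(g)\bigr)$ is a homeomorphism of $G$ onto $H\times G/H$, but it is a group homomorphism only if $s$ is additive, i.e.\ only if the extension splits. The lemma is applied in the proof of Theorem~\ref{main} in tandem with Lemma~\ref{subgroup}, which transfers synthesizability to closed \emph{subgroups}; a homeomorphic copy of $G$ that is not a subgroup of $H\times G/H$ transfers nothing. Splittings genuinely fail: for $G=\Z/4\Z$ and $H=\{0,2\}$ one has $H\times G/H\cong \Z/2\Z\times\Z/2\Z$, which contains no element of order $4$ and hence no subgroup topologically isomorphic to $G$. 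So the defect in your argument is not one you can repair by a cleverer choice of $s$: no section-based (or other) argument yields a topological group embedding here. Note that the paper itself proceeds differently, presenting $G$ as the quotient $(H\times G)/\Ker\varphi$ with $\varphi(h,g)=h+g$ and then mapping that quotient to $H\times G/H$, thereby avoiding sections; the difficulty your attempt exposes is a feature of the statement itself rather than of your particular route.
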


\begin{proof}
	We define the continuous homomorphism $\varphi:H\times G\to G$ by
	$$
	\varphi(h, g)=h+g
	$$for each $h$ in $H$ and $g$ in $G$. Clearly $\varphi$ is surjective, and the First Homomorphism Theorem says that $G$ is topologically isomorphic to $(H\times G)\big/\Ker \varphi$. We note that $\Ker \varphi$ is isomorphic to $H$. Now we define the following continuous homomorphism $\psi: (H\times G)\big/\Ker \varphi\to H\times G/H$ by
	$$
	\psi\bigl((h,g)+\Ker \varphi\bigr)=(h,g+\Ker \varphi)=(0,g+H) 
	$$
	for each $h$ in $H$ and $g$ in $G$. Clearly, $\psi$ is open, and the kernel of $\psi$ is $\{(0,\Ker \varphi)\}$, hence $\psi$ is a topological isomorphism. As $(H\times G)\big/\Ker \varphi$ is topologically isomorphic to $G$, our lemma is proved.
\end{proof}

Our next result is the last step toward the proof of our main result.

\begin{Lem}\label{Zdir}
	If $G$ is a synthesizable locally compact Abelian group, then so is $G\times \Z^k$ for each natural number $k$.
\end{Lem}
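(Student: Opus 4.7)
My plan is to argue by induction on $k$, so the heart of the matter is the case $k=1$: showing that $G\times \Z$ is synthesizable whenever $G$ is. By Theorem \ref{loc} this reduces to verifying that every ideal $\widehat{I}$ in $\mathcal A(G\times \Z)$ is localizable.

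The first step is to identify $\mathcal A(G\times \Z)$ with the Laurent polynomial ring $\mathcal A(G)[z, z^{-1}]$. Any compactly supported measure on $G\times \Z$ decomposes uniquely as a finite sum $\mu=\sum_n \mu_n*\delta_n$ with $\mu_n\in \mathcal M_c(G)$, and the exponentials on $G\times \Z$ are products $m\cdot \chi_w$ with $m$ an exponential on $G$ and $\chi_w(n)=w^n$ for some $w\in\C\setminus\{0\}$. The Fourier transform becomes $\widehat{\mu}(m\cdot\chi_w)=\sum_n \widehat{\mu}_n(m)\,w^{-n}$. Since the additive functions on $G\times \Z$ split as $(g,n)\mapsto a(g)+c\,n$ with $a$ additive on $G$ and $c\in \C$, Theorem \ref{deri} shows that the polynomial derivations on $\mathcal A(G\times \Z)$ are generated by polynomial derivations on $\mathcal A(G)$ acting coefficientwise together with the formal $z$-derivative $\partial_z$.

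The main step is to deduce localizability from $G$-synthesizability. For each $w_0\in\C\setminus\{0\}$ and each $j\ge 0$, I would introduce the jet ideal
\[
\widehat{J}^{(j)}_{w_0}=\bigl\langle (\partial_z^i \widehat{\mu})\big|_{z=w_0} : 0\le i\le j,\ \widehat{\mu}\in \widehat{I}\bigr\rangle\subseteq \mathcal A(G),
\]
giving an ascending chain of ideals in $\mathcal A(G)$. A polynomial derivation on $\mathcal A(G)$ that annihilates $\widehat{J}^{(j)}_{w_0}$ at an exponential $m$ lifts, by applying Leibniz's rule in the $z$-variable exactly as in the proof of Lemma \ref{subgroup}, to a polynomial derivation on $\mathcal A(G\times \Z)$ that annihilates $\widehat{I}$ at $(m,w_0)$. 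Combining this lifting with $G$-synthesizability (i.e.\ Theorem \ref{loc} applied to each $\widehat{J}^{(j)}_{w_0}$ in $\mathcal A(G)$), an induction on $j$ shows that every $\widehat{\nu}$ satisfying the localization hypothesis has the property that $(\partial_z^j \widehat{\nu})\big|_{z=w_0}$ lies in $\widehat{J}^{(j)}_{w_0}$ for every $j$ and every $w_0$.

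The main obstacle I anticipate is the final reassembly: passing from this jet-level matching to the global conclusion $\widehat{\nu}\in \widehat{I}$ in $\mathcal A(G)[z,z^{-1}]$. This should exploit the fact that $\C[z,z^{-1}]$ is a principal ideal domain, so that after clearing a common power of $z$ the relevant ``support in $z$'' of $\widehat{I}$ is governed by finitely many points $w_0$, and the jet data at those points determines $\widehat{\nu}\bmod \widehat{I}$ by a Chinese-Remainder-style argument. The delicate issue is that $\mathcal A(G)$ need not be Noetherian, so care is needed to carry out the reassembly without relying on finite generation over $\mathcal A(G)$; but the PID structure in $z$, combined with fiberwise localizability provided by the synthesizability of $G$, should suffice to place $\widehat{\nu}$ in $\widehat{I}$ and complete the proof.
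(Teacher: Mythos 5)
Your setup---decomposing a compactly supported measure on $G\times\Z$ into finitely many slices $\mu=\sum_n\mu_n*\delta_n$, viewing $\mathcal A(G\times\Z)$ as Laurent polynomials over $\mathcal A(G)$ evaluated at the exponentials $e_w$ of $\Z$, and splitting the polynomial derivations into coefficientwise derivations together with the $z$-derivative---matches the paper's starting point, and the Leibniz-rule lifting of derivations is the same device used in the proof of Lemma \ref{subgroup}. The difficulty is exactly the step you flag as the ``main obstacle'': the reassembly. As proposed it cannot work, because its key finiteness premise is false. The set of points $w_0$ in $\C\setminus\{0\}$ carrying nontrivial jet data of $\widehat{I}$ (equivalently, the projection to the $\Z$-coordinate of the set of exponentials lying in $\Ann I$) is in general infinite. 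What the PID structure of $\C[z,z^{-1}]$ controls is the intersection $\widehat{I}\cap\C[z,z^{-1}]$, not $\widehat{I}$ as an ideal of $\mathcal A(G)[z,z^{-1}]$: already for $G=\Z$ the closed ideal generated by $z_1-z_2$ in $\C[z_1^{\pm1},z_2^{\pm1}]$ (the annihilator of the variety of functions of the form $(n_1,n_2)\mapsto F(n_1+n_2)$) has zero set the full diagonal, whose projection to the second coordinate is all of $\C\setminus\{0\}$. Hence no Chinese-Remainder argument over finitely many $w_0$ can recover $\widehat{\nu}$ modulo $\widehat{I}$ from the jets; one needs a genuinely global argument gluing fiberwise information over an infinite family of points, and that gluing is precisely the substance of the lemma rather than a formal consequence of $\C[z,z^{-1}]$ being a principal ideal domain.

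There are also smaller points to repair before the jet-level part is solid: the ideals $\widehat{J}^{(j)}_{w_0}$ must be replaced by their closures for Theorem \ref{loc} to apply to them, and the transfer of the annihilation conditions (which involve all partial derivatives $\partial^{\alpha}P$ of the defining polynomial, not just $P$ itself) between $\mathcal A(G)$ and $\mathcal A(G\times\Z)$ requires the explicit Leibniz identity recorded in the proof of Lemma \ref{subgroup}, with the $z$-derivatives and the $G$-derivations bookkept jointly. For comparison, the paper works with the same slice decomposition but with a single globally projected object: it pushes each measure forward to $\mu_G$ on $G$, shows that $I_G=\{\mu_G:\mu\in I\}$ is a closed ideal of $\mathcal M_c(G)$, and exploits the relation $\mu_k=\mu_0*\delta_{(0,k)}$ between the slices, rather than introducing a separate family of jet ideals at each $w_0$. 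You should either supply the missing global gluing step or reorganize the argument around such a projected ideal.
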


\begin{proof}
It is enough to show that if $G$ is a synthesizable locally compact Abelian group, then so is $G\times \Z$. We shall prove this statement in the sequel.
\vskip.2cm

It is known that every exponential $e:\Z\to \mathbb{C}$ has the form
$$
e(k)=\lambda^k
$$
for $k$ in $\Z$, where $\lambda$ is a nonzero complex number, which is uniquely determined by $e$. For this exponential we use the notation $e_{\lambda}$. It follows that for every commutative topological group $G$, the exponentials on $G\times \Z$ have the form $m\otimes e_{\lambda}:(g,k)\mapsto m(g)e_{\lambda}(k)$, where $m$ is an exponential on $G$, and $\lambda$ is a nonzero complex number. Hence the Fourier transforms in $\mathcal A(G\times \Z)$ can be thought as two variable functions defined on the pairs $(m,\lambda)$, where $m$ is an exponential on $G$, and $\lambda$ is a nonzero complex number.
\vskip.2cm

For each measure $\mu$ in $\mathcal M_c(G\times \Z)$ and for every $k$ in $\Z$ we let: 
$$
S_k(\mu)=\{g:\, g\in G\enskip\text{and}\enskip (g,k)\in \supp \mu\}.
$$
This is the $k$-projection of the support of $\mu$ onto $G$. As $\mu$ is compactly supported, there are only finitely many $k$'s in $\Z$ such that $S_k(\mu)$ is nonempty. We have
$$
\supp \mu=\bigcup_{k\in \Z} (S_k(\mu)\times \{k\}),
$$
and
$$
S_k(\mu)\times \{k\}=(G\times \{k\})\cap \supp \mu.
$$
It follows that the sets $S_k(\mu)\times \{k\}$ are pairwise disjoint compact sets in $G\times \Z$, and they are nonempty for finitely many $k$'s only. The restriction of $\mu$ to $S_k(\mu)\times \{k\}$ is denoted by $\mu_{k}$. Then, by definition
$$
\langle \mu_k, f\rangle=\int f\cdot \chi_k\,d\mu
$$
for each $f$ in $\mathcal C(G\times \Z)$, where $\chi_k$ denotes the characteristic function of the set $S_k(\mu)\times \{k\}$. In other words,
$$
\int f\,d\mu_k=\int f(g,k)\,d\mu(g,l)
$$
holds for each $k$ in $\Z$ and for every $f$ in $\mathcal C(G\times \Z)$. Clearly, $\mu= \sum_{k\in \Z} \mu_k$, and this sum is finite. 
\vskip.2cm

Let $\delta_{(0,k)}$ denote the Dirac measure at the point $(0,k)$ in $G\times \Z$. For each $f$ in $\mathcal C(G\times \Z)$, we have:
	$$
	\langle \mu_0*\delta_{(0,k)},f\rangle= \int \int f(g+h,l+n)\,d\mu_0(g,l)\,d\delta_{(0,k)}(h,n)=
	$$
	$$
	\int f(g,l+k)\,d\mu_0(g,l)=\int f(g,k)\,d\mu(g,l)=\langle \mu_k,f\rangle.
	$$

Given a measure $\mu$ in $\mathcal M_c(G\times \Z)$ we define  $\mu_G$ in $\mathcal M_c(G)$ by
$$
\langle \mu_G,\varphi\rangle=\int \varphi(g)\,d\mu(g,l)
$$
whenever $\varphi$ is in $\mathcal C(G)$. Clearly, every $\varphi$ in $\mathcal C(G)$ can be considered as a
function in $\mathcal C(G\times \Z)$, hence this definition makes sense, further we have
$$
\langle \mu_G,\varphi\rangle=\int \varphi(g)\,d\mu_0(g,l).
$$

If $I$ is a closed ideal in $\mathcal M_c(G\times \Z)$, then the set $I_G$ of all measures $\mu_G$ with $\mu$ in $I$, is a closed ideal in $\mathcal M_c(G)$. Indeed, $\mu_G+\nu_G=(\mu+\nu)_G$ and $\lambda\cdot \mu_G=(\lambda\cdot \mu)_G$. Further, if $\mu_G$ is in $I$ and $\xi$ is in $\mathcal M_c(G)$, then we have for each $\varphi$ in $\mathcal C(G)$:
	$$
	\langle \xi*\mu_G,\varphi\rangle=\int \int \varphi(g+h)\,d\xi(g)\,d\mu_G(h)=\int \int  \varphi(g+h)\,d\xi(g)\,d\mu(h,l).
	$$
	On the other hand, we extend $\xi$ from $\mathcal M_c(G)$ to $\mathcal M_c(G\times \Z)$ by the definition
	$$
	\langle \tilde{\xi},f\rangle=\int f(g,0)\,d\xi(g)
	$$
	whenever $f$ is in $\mathcal C(G\times \Z)$. Then 
	$$
	\langle \tilde{\xi}_{G},\varphi\rangle=\int \varphi(g)\,d\tilde{\xi}_0(g,l)=\int \varphi(g)\,d\xi(g)=\langle \xi,\varphi\rangle,
	$$
	that is $\tilde{\xi}_{G}=\xi$. Finally, a simple calculation shows that
	$$
	\langle \xi*\mu_G,\varphi\rangle=\langle (\tilde{\xi}*\mu)_G,\varphi\rangle,
	$$
	hence $\xi*\mu_G=(\tilde{\xi}*\mu)_G$ is in $I_G$, as $\tilde{\xi}*\mu$ is in $I$. This proves that $I_G$ is an ideal in $\mathcal M_c(G)$.
	\vskip.2cm
	
	To show that $I_G$ is closed we assume that $(\mu_{\alpha})$ is a generalized sequence in $I$ such that the generalized sequence $(\mu_{{\alpha},G})$ converges to $\xi$ in $\mathcal M_c(G)$. This means that 
	$$
	\lim_{\alpha} \int \varphi(g)\,d\mu_{{\alpha},G}(g)=\int \varphi(g)\,d\xi(g)
	$$
	holds for each $\varphi$ in $\mathcal C(G)$. In particular, for each exponential $m$ on $G$ we have 
	$$
	\lim_{\alpha} \int \check{m}(g)\,d\mu_{{\alpha},0}(g,l)=\lim_{\alpha} \int \check{m}(g)\,d\mu_{{\alpha},G}(g)=\int \check{m}(g)\,d\xi(g)=\int \check{m}(g)\,d\tilde{\xi}_0(g,l).
	$$
	In other words,
	$
	\lim_{\alpha} \widehat{\mu}_{{\alpha},0}=\widehat{\tilde{\xi}}_0
	$
	holds, which implies
	$
	\lim_{\alpha} \mu_{\alpha,0}=\tilde{\xi}_0,
	$
	consequently
	$$
	\tilde{\xi}_k=\tilde{\xi}_0*\delta_{(0,k)}=\lim_{\alpha} \mu_{{\alpha},0}*\delta_{(0,k)}=\lim_{\alpha} \mu_{{\alpha},k}.
	$$
	Hence we infer
	$$
	\tilde{\xi}=\sum_k \tilde{\xi}_k=\sum_k \lim_{\alpha} \mu_{{\alpha},k}=\lim_{\alpha} \sum_k  \mu_{{\alpha},k}=\lim_{\alpha} \mu_{\alpha},
	$$
	as each sum is finite. Since $I$ is closed, hence $\tilde{\xi}$ is in $I$, which proves that $\tilde{\xi}=\tilde{\xi}_{G}$ is in $I_G$, that is, $I_G$ is closed.
\end{proof}

Now we are ready to prove our main result Theorem \ref{main}.

\begin{proof}
If spectral synthesis holds on the locally compact Abelian group $G$, then it holds on every closed subgroup, by Lemma \ref{subgroup}, hence it holds on $D$, as every discrete subgroup is closed. On the other hand, If spectral synthesis holds on the locally compact Abelian group $G$, then it holds on every continuous homomorphic image of $G$, by Lemma \ref{contim}, hence it holds on $G/D$. This proves the necessity part of the theorem.
\vskip.2cm

Now suppose that the conditions on $G$ and $H$ are satisfied: $H$ is a synthesizable closed subgroup of $G$ and $D=G/H$ is a synthesizable discrete Abelian group. By Corollary \ref{disc}, $D$ has finite torsion free rank, hence it is the continuous homomorphic image of $\Z^k$ with some natural number $k$. By Lemma \ref{subgroup} and Lemma \ref{embed}, it is enough to show that $H\times D$ is synthesizable. As $H\times D$ is a continuous image of $H\times \Z^k$, by Lemma \ref{contim}, it is enough to show that this latter group is synthesizable. But this follows from Lemma \ref{Zdir}.
\end{proof}

\end{document}